\documentclass[12pt]{article}
\usepackage{amsmath, amssymb}
\usepackage{color}

\setlength{\textwidth}{15cm}
\setlength{\textheight}{21.5cm}
\setlength{\evensidemargin}{0.5cm}
\setlength{\oddsidemargin}{0.5cm}
\setlength{\topmargin}{0cm}
\setlength{\headheight}{1cm}
\setlength{\headsep}{0cm}
\setlength{\itemsep}{0pt}

\newtheorem{theorem}{Theorem}

\newtheorem{lemma}{Lemma}

\newtheorem{definition}{Definition}

\newenvironment{proof}{\noindent\textit{Proof.}}{\hfill $\Box$\par\bigskip}

\title{The union-closed set conjecture is true}%
\author{Roberto Demontis \thanks{robdemontis@gmail.com}}%

\date{ }%

\begin{document}
\maketitle

\begin{abstract}
We prove that the conjecture made by Peter Frankl in the late 1970s is true. In other words for every finite union-closed family which contains a non-empty set, there is an element that belongs to at least half of its members.
\end{abstract}

The union-closed set conjecture is  one of the most famous combinatorial problems. It asserts that for every finite union-closed family which contains a non-empty set, there is an element that belongs to at least half of its members. In literature, there have been several partial results regarding this, as can be seen from the survey paper of Bruhn and Schaudt \cite{Bruhn}. For instance, in  \cite{Ale} V. B. Alekseev approximates the number of union-closed families of subset of $[n]$. In \cite{Balla} I. Balla , B. Bollobàs and T. Eccles prove that the conjecture occurs for union-closed families with more sets compared to the size of the universe, this size has been further improved  by Eccles in \cite{Ecc}.  On the other hand in \cite{Bos} I. Bo\textbf{$\check{s}$}njak  and P. Markovi\textbf{$\acute{c}$} show that the conjecture is true for any-closed family with a universe of 11 elements. Moreover in \cite{Fal} V. Falgas-Ravry proves that if a union-closed set with a universe of $n$ elements contains at most $2n$ element with the property to be separated, then the conjecture holds. Moreover in \cite{Mass} J. Massberg reinforces the latter result. 
\\

First of all we introduce some preliminary terminology and notation according to the literature.

 We call $\mathcal{A}= 2^{[n]}-\{ \emptyset\}$.
We will use uppercase letters for elements of $\mathcal{A}$ , lowercase letters for elements of  $\cup \mathcal{A}$, mathcal character uppercase letters for subsets  of  $\mathcal{A}$.

 Let $\mathcal{F} \subseteq \mathcal{A}$ union-closed if
for all $X, Y\in \mathcal{F}$, $X\cup Y \in\mathcal{F}$.

 For all $i \leq n$  we say $\mathcal{F}^i=\{X\in \mathcal{F}: i\in X\}$. 

We note $|\mathcal{A}|=2^n-1$ and $|\mathcal{A}^i|=2^{n-1}$.

The purpose of this paper is to prove

\begin{theorem}
 Let $\mathcal{F}\subseteq \mathcal{A}$ be a \textbf{union-closed set}, we can find $i \leq n$  such that  $$|\mathcal{F}|\leq2|\mathcal{ F}^i|.$$
\end{theorem}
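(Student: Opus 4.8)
The plan is to recast the statement in the form that is cleanest to attack combinatorially. Since $\card{\mathcal{F}} \le 2\card{\mathcal{F}^i}$ is the same as $\card{\mathcal{F} \setminus \mathcal{F}^i} \le \card{\mathcal{F}^i}$, where $\mathcal{F}\setminus\mathcal{F}^i = \set{X \in \mathcal{F} : i \notin X}$, it suffices to exhibit a single index $i \le n$ together with an injection $\phi \colon \set{X \in \mathcal{F} : i \notin X} \to \mathcal{F}^i$. Thus the whole problem reduces to a good choice of $i$ and an injective ``$i$-insertion'' map. Before choosing $i$ I would normalise: replacing $[n]$ by $M = \bigcup \mathcal{F}$ changes nothing, since union-closedness guarantees $M \in \mathcal{F}$ and every element of $[n]\setminus M$ lies in no member at all, so I may assume $M = [n]$ and in particular that every $i$ occurs in at least one member.

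First I would test the most natural map, the up-shift $X \mapsto X \cup \set{i}$. On the full family $\mathcal{A}$ this is exactly the injection that yields $\card{\mathcal{A}} = 2^n-1 \le 2\cdot 2^{n-1} = 2\card{\mathcal{A}^i}$, the baseline recorded above, so the strategy is at least consistent with the extremal-looking case. For a general $\mathcal{F}$, however, $X \cup \set{i}$ need not belong to $\mathcal{F}$, so the up-shift is not available directly. The remedy I would try is to fix, for each $i$, a member $Y_i \in \mathcal{F}^i$ (for instance a minimal one) and use $\phi_i(X) = X \cup Y_i$; union-closedness now guarantees $\phi_i(X) \in \mathcal{F}^i$, so the target is correct for every choice of $i$.

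The key steps would then be: (i) prove that for at least one $i$ the map $\phi_i$ can be made injective after discarding a controllable set of collisions; (ii) bound the number of collisions (pairs $X \ne X'$ with $X \cup Y_i = X' \cup Y_i$, which occur precisely when $X$ and $X'$ differ only inside $Y_i$) by the surplus $\card{\mathcal{F}^i} - \card{\mathcal{F}\setminus\mathcal{F}^i}$ one is trying to establish; and (iii) select the right index rather than averaging, since no uniform averaging over elements can close the gap: the mean abundance $\frac1n\sum_i\card{\mathcal{F}^i} = \frac1n\sum_{X}\card{X}$ may well sit below $\card{\mathcal{F}}/2$, which is exactly why a pointwise-optimal $i$ must be singled out. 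A complementary route is induction on $n$: the subfamily $\mathcal{F}_0 = \set{X \in \mathcal{F} : n \notin X}$ is union-closed over $[n-1]$, so the inductive hypothesis supplies an abundant $i \le n-1$ for $\mathcal{F}_0$, after which one tries to lift abundance from $\mathcal{F}_0$ to $\mathcal{F}$ by controlling the members that contain $n$.

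I expect the main obstacle to be precisely the tension between steps (i) and (ii): the map guaranteed to land inside $\mathcal{F}$ (union with $Y_i$) is the one most prone to collisions, whereas the map that is automatically injective (insert $i$) is the one that can leave $\mathcal{F}$. Reconciling ``stays in the family'' with ``injective'' for a single well-chosen $i$ cannot be done by inspecting one element in isolation; it should require the global semilattice structure of $\mathcal{F}$ — for example, counting over maximal chains, or weighting each member by the number of union-irreducible generators below it — so that the collisions incurred at the worst element are paid for by the surplus at another. Making that trade-off quantitative, rather than merely plausible, is the crux on which the whole argument stands or falls.
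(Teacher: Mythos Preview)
Your proposal is an honest research outline, not a proof. You correctly reduce the statement to exhibiting an index $i$ together with an injection $\set{X\in\mathcal{F}: i\notin X}\to\mathcal{F}^i$, and you propose $\phi_i(X)=X\cup Y_i$ for some fixed $Y_i\in\mathcal{F}^i$. But the entire content of the problem lies in the step you yourself label ``the crux on which the whole argument stands or falls'' and then leave open. Collisions $X\cup Y_i=X'\cup Y_i$ occur whenever $X\triangle X'\subseteq Y_i$, and you give no reason why, for \emph{some} choice of $i$ and $Y_i$, these collisions are controlled by the surplus $\card{\mathcal{F}^i}-\card{\mathcal{F}\setminus\mathcal{F}^i}$ you are trying to establish; that inequality is precisely the theorem. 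The inductive variant on $n$ is equally unfinished: lifting an abundant index from $\mathcal{F}_0=\set{X\in\mathcal{F}:n\notin X}$ to $\mathcal{F}$ requires comparing $\card{\mathcal{F}_0^i}$ with $\card{\set{X\in\mathcal{F}^i:n\in X}}$, and nothing in your sketch bounds the latter. The closing remarks about ``weighting by union-irreducible generators'' and ``trading surplus across elements'' are suggestions, not arguments. As written, nothing is proved beyond the trivial reformulation in the first paragraph.

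The paper's route is entirely different and shares no steps with yours. It never builds an injection inside $\mathcal{F}$; instead it passes to the complement $\mathcal{D}=\mathcal{A}\setminus\mathcal{F}$ and proves (Theorem~5) that for the index $j$ minimising $\card{\mathcal{D}^j}$ one has $2\card{\mathcal{D}^j}\le\card{\mathcal{D}}+1$. Theorem~1 then follows from the one-line count $\card{\mathcal{F}}=2\card{\mathcal{A}^i}-1-\card{\mathcal{D}}\le 2\card{\mathcal{A}^i}-2\card{\mathcal{D}^i}=2\card{\mathcal{F}^i}$. The inequality for $\mathcal{D}$ is obtained not by averaging or by a single map but by constructing, for every union-closed $\mathcal{F}$, a one-set-at-a-time deletion sequence $\mathcal{A}=\mathcal{A}_0\supset\cdots\supset\mathcal{A}_t=\mathcal{F}$ that stays union-closed throughout (Theorem~2, Lemmas~4--5), and then arranging the last two deletions so that one contains $i$ and one does not (``optimal sequence'', Lemma~6, Theorem~3); Theorem~4 runs an induction along this sequence. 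None of that machinery --- bases $B(\mathcal{F})$, vincolated sets, ideal and optimal sequences, quasiminimal indices --- is present or hinted at in your outline, and conversely your injection $\phi_i$ plays no role in the paper.
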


Summarizing the structure of the paper we say that we want to imagine each union-closed set $\mathcal{F}$ obtained as the successive deletion of sets from the universe $\mathcal{A}$:

\begin{enumerate}
	\item Lemma 1, 2, 3 illustrate the properties of some elements of an union-closed set, that we will call basis of $\mathcal{F}$.
           \item Theorem 2 shows that it is possible to obtain each union-closed set $\mathcal{F}$ through successive deletions of individual elements of $\mathcal{A}$ so as to have only union-closed sets during the whole procedure.
            \item  Lemma 5 shows that we can get a sequence described in Theorem 2 in which the sets containing a certain element are eliminated only at the the end of the procedure.
           \item In Theorem 3 and Lemma 6 we create a sequence in which it is possibile to use induction.
           \item In Theorem 4 and Theorem 5 we prove a sentence that implies theorem 1.
\end{enumerate}

Now we start the proof:

\begin{definition}
The set $B(\mathcal{F})=\{X \in \mathcal{F} \ | \ \forall Y, Z \in \mathcal{F}- \{X\}: X\neq Y \cup Z\}$ is said \textbf{basis} of $\mathcal{F}$ .
\end{definition}

We  can prove

\begin{lemma}
For all $X \in \mathcal{F}$, not necessarily union-closed, we can always find a set  $\mathcal{T}=\{T_1;\dots; T_r\} \subseteq B(\mathcal{F})$, such that  $X = \cup \mathcal{T}$.
\end{lemma}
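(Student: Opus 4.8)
The plan is to argue by strong induction on the cardinality $|X|$. The crucial feature of the definition of $B(\mathcal{F})$ to exploit is that any witnesses $Y,Z \in \mathcal{F}-\{X\}$ to $X = Y\cup Z$ must satisfy $Y,Z \subseteq X$, and hence, being distinct from $X$, must be \emph{proper} subsets of $X$. This is exactly what makes the induction parameter decrease. I would begin by noting that the induction bottoms out at the singletons: if $X=\{a\}$, its only proper subset is $\emptyset$, which does not belong to $\mathcal{A}$ and therefore not to $\mathcal{F}$, so $X$ cannot be a union of two elements of $\mathcal{F}-\{X\}$ and thus $X \in B(\mathcal{F})$ automatically, giving $\mathcal{T}=\{X\}$.

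For the inductive step I would fix $X \in \mathcal{F}$ and assume the statement holds for every element of $\mathcal{F}$ of strictly smaller cardinality. I split into two cases. If $X \in B(\mathcal{F})$, then $\mathcal{T}=\{X\}$ already works, since $X = \cup\{X\}$ and $\{X\}\subseteq B(\mathcal{F})$. Otherwise, by the definition of the basis there exist $Y,Z \in \mathcal{F}-\{X\}$ with $X = Y\cup Z$. From $X=Y\cup Z$ I get $Y\subseteq X$ and $Z\subseteq X$, and since $Y\neq X$ and $Z\neq X$ these inclusions are strict, so $|Y|<|X|$ and $|Z|<|X|$.

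Now I would invoke the induction hypothesis on $Y$ and on $Z$ separately: there are $\mathcal{T}_Y,\mathcal{T}_Z \subseteq B(\mathcal{F})$ with $Y=\cup\mathcal{T}_Y$ and $Z=\cup\mathcal{T}_Z$. Taking $\mathcal{T}=\mathcal{T}_Y\cup\mathcal{T}_Z$, which is again a subset of $B(\mathcal{F})$, one computes
\[
\cup\mathcal{T}=(\cup\mathcal{T}_Y)\cup(\cup\mathcal{T}_Z)=Y\cup Z=X,
\]
completing the induction and hence the proof.

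I expect no serious obstacle here: the result is a clean structural decomposition, and the only step requiring genuine care is verifying the strict decrease of $|X|$ in the inductive step. That step relies entirely on reading the definition of $B(\mathcal{F})$ correctly, namely that $Y,Z$ range over $\mathcal{F}-\{X\}$ and so, whenever $X = Y \cup Z$, they are forced to be proper subsets of $X$. Once this is pinned down, the rest is routine.
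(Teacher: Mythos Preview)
Your proof is correct and follows essentially the same approach as the paper: strong induction on $|X|$, with the dichotomy $X\in B(\mathcal{F})$ versus $X=Y\cup Z$ for $Y,Z\in\mathcal{F}\setminus\{X\}$ of strictly smaller cardinality, then taking the union of the $\mathcal{T}$'s furnished by the inductive hypothesis. The only cosmetic difference is that the paper anchors the induction at $|X|=\min_{Y\in\mathcal{F}}|Y|$ (where $X\in B(\mathcal{F})$ is forced) rather than at singletons; since your inductive step already covers every $X\in\mathcal{F}$, your singleton remark is harmless even when $\mathcal{F}$ contains no singletons.
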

\begin{proof}
By induction on $|X|$ in $\mathcal{F}$.

By definition of basis, if $|X|= min_{Y\in \mathcal{F}}|Y|$,  we  must have $X \in  B(\mathcal{F})$, then we set $\mathcal{T}=\{X\}$ and so the Lemma is true.

If $|X|> min_{Y\in \mathcal{F}}|Y|$ we have two possibilities:
\begin{enumerate}
	\item $X \in  B(\mathcal{F})$, in this case the Lemma is true.
            \item  $X \notin B(\mathcal{F})$, in this case $X= Y\cup Z$, with $Y, Z\in \mathcal{F}$ and $|Y|, |Z| < |X|$, so the Lemma is true by induction.
\end{enumerate}
\end{proof}

Now we want to describe how to build a union-closed set $\mathcal{F}$ through successive eliminations of elements of $\mathcal{A}$.

 \begin{definition}
For each $\mathcal{F} \subseteq \mathcal{A}$ union-closed set, we say \textbf{sequence from $\mathcal{A}$ to $\mathcal{F}$} the sequence of  sets $\mathcal{A}_0, \mathcal{A}_1 \dots, \mathcal{A}_t$ such that:
\begin{enumerate}
	\item $\mathcal{A}_0= \mathcal{A}$,
           \item $\mathcal{A}_1= \mathcal{A}_0-\{X_1\}$
           \item  $\mathcal{A}_r= \mathcal{A}_{r-1}-\{X_r\}$
           \item $\mathcal{A}_t =\mathcal{A}_{t-1}-\{X_t\}= \mathcal{F}$.
          \end{enumerate}
\end{definition}
 
 \begin{definition}
For all union-closed sets $\mathcal{F} \subseteq \mathcal{A}$, we say \textbf{union-closed sequence from $\mathcal{A}$ to $\mathcal{F}$} the sequence $\mathcal{A}_0, \mathcal{A}_1 \dots,\mathcal{A}_t$ such that: $\forall i=1 \dots t $  $\mathcal{A}_i$ is a union-closed set.
\end{definition}

By the following two Lemmas, we will prove that for each $\mathcal{F} \subseteq \mathcal{A}$ union-closed set a union-closed sequence from $\mathcal{A}$ to $\mathcal{F}$ exists.

\begin{lemma}
Let $\mathcal{F}\subseteq \mathcal{A}$ be a union-closed set,  let $ B \in B(\mathcal{F})$,  $ \mathcal{F}-\{B\}$ is a union-closed set. 
\end{lemma}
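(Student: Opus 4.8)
The plan is to show that removing a basis element $B$ from $\mathcal{F}$ cannot destroy union-closure. The key observation is that a basis element, by Definition 1, is characterized as a set that is \emph{not} expressible as the union of two other members of $\mathcal{F}$. This means $B$ is, in a sense, ``union-irreducible'': it never arises as $Y \cup Z$ for $Y, Z \in \mathcal{F} - \{B\}$. The intuition is that such an element can be safely deleted because no union of surviving sets will ever ask for it back.

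First I would fix arbitrary $X, Y \in \mathcal{F} - \{B\}$ and aim to show $X \cup Y \in \mathcal{F} - \{B\}$. Since $\mathcal{F}$ is union-closed, we already know $X \cup Y \in \mathcal{F}$, so the only thing that can go wrong is $X \cup Y = B$. Thus the entire content of the lemma reduces to excluding this one bad case. I would argue by contradiction: suppose $X \cup Y = B$ with $X, Y \in \mathcal{F} - \{B\}$. Then $X, Y \in \mathcal{F}$ and $X \neq B \neq Y$, so $X, Y \in \mathcal{F} - \{B\}$ are two sets whose union equals $B$, which directly contradicts the defining property of $B \in B(\mathcal{F})$ in Definition 1 (namely that for all $Y, Z \in \mathcal{F} - \{B\}$, $B \neq Y \cup Z$).

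Hence $X \cup Y \neq B$, and combined with $X \cup Y \in \mathcal{F}$ this gives $X \cup Y \in \mathcal{F} - \{B\}$. Since $X, Y$ were arbitrary, $\mathcal{F} - \{B\}$ is union-closed.

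The proof is essentially immediate once the definition of basis is unpacked, so I do not anticipate a genuine obstacle here; the argument is a direct translation of Definition 1. The only point requiring a moment of care is confirming that $X$ and $Y$ genuinely lie in $\mathcal{F} - \{B\}$ (not merely in $\mathcal{F}$) when we invoke the basis condition, but this is guaranteed by our choice of $X, Y$ from $\mathcal{F} - \{B\}$ at the outset. One might also note that the case $X = Y$ is subsumed, since $B = X \cup X = X$ would force $X = B$, again contradicting $X \in \mathcal{F} - \{B\}$.
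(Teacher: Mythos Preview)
Your proof is correct and follows essentially the same approach as the paper: take arbitrary $X, Y \in \mathcal{F}-\{B\}$, observe that $X \cup Y \in \mathcal{F}$ by union-closure, and use the definition of $B(\mathcal{F})$ to rule out $X \cup Y = B$. The paper's proof is simply a more terse version of the same argument.
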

\begin{proof}
It is sufficient to observe that by definition of basis for each $X, Y \in \mathcal{F}-\{B\}$  $ X\cup Y \neq B$. As a consequence $ X\cup Y \in \mathcal{F}-\{B\} $.
\end{proof}

\begin{lemma}
Let $\mathcal{F}\subseteq \mathcal{A}$ ($\mathcal{F}$ is not neccessarily union-closed)  let $ Z \in \mathcal{F}- B(\mathcal{F})$,  then $ B(\mathcal{F})\subseteq B(\mathcal{F}-\{Z\})$. 
\end{lemma}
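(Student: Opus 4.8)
The plan is to unwind the definition of basis directly; no induction or auxiliary construction is needed, since the statement is a pure monotonicity fact about how $B(\cdot)$ behaves under deletion of a non-basis element. I would fix an arbitrary $X \in B(\mathcal{F})$ and show $X \in B(\mathcal{F}-\{Z\})$, which gives the claimed inclusion.

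First I would dispose of the membership requirement. Since $X \in B(\mathcal{F})$ while $Z \in \mathcal{F}-B(\mathcal{F})$, we have $X \neq Z$, and therefore $X \in \mathcal{F}-\{Z\}$. This is exactly the first clause needed for $X$ to lie in $B(\mathcal{F}-\{Z\})$.

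The crux is a containment of the ``pool of decomposers'' over which the basis condition quantifies. By definition, $X \in B(\mathcal{F})$ means that there are no $Y, W \in \mathcal{F}-\{X\}$ with $X = Y\cup W$. To test whether $X \in B(\mathcal{F}-\{Z\})$, one asks the same non-decomposition question but only over pairs $Y, W \in (\mathcal{F}-\{Z\})-\{X\}$. Since deleting $Z$ only removes a candidate, $(\mathcal{F}-\{Z\})-\{X\} \subseteq \mathcal{F}-\{X\}$, so a universal statement valid over the larger set is a fortiori valid over the smaller one. Hence $X$ admits no decomposition inside $\mathcal{F}-\{Z\}$ either, i.e. $X \in B(\mathcal{F}-\{Z\})$, and the proof closes in one line.

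I do not anticipate a genuine obstacle: the only point worth flagging is that the Lemma asserts an inclusion, not an equality. Deleting $Z$ can in fact create new basis elements (precisely those $W$ whose every decomposition in $\mathcal{F}$ used $Z$), but since we are only required to preserve the elements already in $B(\mathcal{F})$, this phenomenon is irrelevant and the set-containment above suffices.
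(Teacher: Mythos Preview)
Your proof is correct and follows essentially the same route as the paper: fix an element of $B(\mathcal{F})$ and observe that the non-decomposition condition over $\mathcal{F}-\{X\}$ persists over the smaller set $(\mathcal{F}-\{Z\})-\{X\}$. You are slightly more careful than the paper in explicitly verifying $X\neq Z$ so that $X\in\mathcal{F}-\{Z\}$, but otherwise the arguments are identical.
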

\begin{proof}
 Let $ T \in B(\mathcal{F})$, for all $ X, Y \in \mathcal{F}-\{T\}$ we have $X\cup Y \neq T$.In other words we can claim that $ \forall X, Y \in\mathcal {F}-\{T; Z \}$  $X\cup Y \neq T$, as a consequence $T \in  B(\mathcal{F}-\{Z\})$.
\end{proof}

 \begin{theorem}
Let $\mathcal{F}\subseteq \mathcal{A}$ be a union-closed set, then it exists a union-closed sequence from $\mathcal{A}$ to $\mathcal{F}$.
\end{theorem}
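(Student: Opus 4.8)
The plan is to realize $\mathcal{F}$ as the endpoint of a chain of single-set deletions starting from $\mathcal{A}$, where at every step I remove a set that is simultaneously (i) absent from $\mathcal{F}$, so that I am genuinely approaching $\mathcal{F}$, and (ii) a basis element of the family built so far, so that Lemma 2 guarantees that the deletion keeps the family union-closed. Since $\mathcal{A}=2^{[n]}-\{\emptyset\}$ is itself union-closed (the union of two non-empty sets is non-empty), the sequence starts legitimately; and because each deletion strictly lowers the cardinality, the process is forced to terminate, necessarily at $\mathcal{F}$.

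The heart of the argument is to show that a set with both properties is always available as long as the current union-closed family $\mathcal{G}$ properly contains $\mathcal{F}$. I would argue as follows. Since $\mathcal{G}-\mathcal{F}\neq\emptyset$, choose $X\in\mathcal{G}-\mathcal{F}$ of minimum cardinality and claim $X\in B(\mathcal{G})$. If not, then $X=Y\cup Z$ with $Y,Z\in\mathcal{G}-\{X\}$, whence $Y\subsetneq X$ and $Z\subsetneq X$, so $|Y|,|Z|<|X|$. By the minimality of $X$ in $\mathcal{G}-\mathcal{F}$, neither $Y$ nor $Z$ can lie in $\mathcal{G}-\mathcal{F}$, so $Y,Z\in\mathcal{F}$; but then $X=Y\cup Z\in\mathcal{F}$ because $\mathcal{F}$ is union-closed, contradicting $X\notin\mathcal{F}$. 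Hence $X$ is a basis element of $\mathcal{G}$ lying outside $\mathcal{F}$, which is exactly the set I want to delete.

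With this in hand the theorem follows by induction on $|\mathcal{G}-\mathcal{F}|$, applied to $\mathcal{G}=\mathcal{A}$. If $\mathcal{G}=\mathcal{F}$, the trivial one-term sequence works. Otherwise, delete the basis element $X$ just produced; by Lemma 2 the family $\mathcal{G}-\{X\}$ is union-closed, it still contains $\mathcal{F}$, and $|(\mathcal{G}-\{X\})-\mathcal{F}|=|\mathcal{G}-\mathcal{F}|-1$. By the inductive hypothesis there is a union-closed sequence from $\mathcal{G}-\{X\}$ to $\mathcal{F}$, and prepending $\mathcal{G}$ to it yields a union-closed sequence from $\mathcal{G}$ to $\mathcal{F}$.

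The main obstacle is the minimality claim of the second paragraph; the rest is bookkeeping. The crucial point is that the set one is forced to remove cannot be a union of two strictly smaller members of the current family, since such members would be too small to sit in $\mathcal{G}-\mathcal{F}$, hence would lie inside the union-closed family $\mathcal{F}$ and already reconstruct $X$ there. I note in passing that this route uses only Lemma 2 and does not appear to require Lemma 3, which presumably supports the more refined constructions in the later sections.
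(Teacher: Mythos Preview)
Your proof is correct. Both you and the paper reach $\mathcal{F}$ by repeatedly deleting a basis element of the current family that lies outside $\mathcal{F}$, invoking Lemma~2 at each step; the only substantive task is to show such an element exists whenever $\mathcal{G}\supsetneq\mathcal{F}$. Here the two arguments diverge. The paper argues by contradiction: if at some stage $B(\mathcal{A}_{q-1})\subseteq\mathcal{F}$, then Lemma~3 propagates this inclusion forward to $B(\mathcal{F})$, and Lemma~1 forces every remaining set to lie in $\mathcal{F}$. You instead exhibit the required basis element directly, taking $X\in\mathcal{G}-\mathcal{F}$ of minimum cardinality and observing that any decomposition $X=Y\cup Z$ with $Y,Z\in\mathcal{G}-\{X\}$ would push $Y,Z$ into $\mathcal{F}$ by minimality, hence $X\in\mathcal{F}$. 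Your route is shorter and, as you note, bypasses Lemmas~1 and~3 entirely; the paper's route has the mild advantage of making the role of the basis more structural, which it reuses later, but for the present theorem your argument is the cleaner one.
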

\begin{proof}
Let $\mathcal{A}_0, \mathcal{A}_1 \dots, \mathcal{A}_t$ a sequence from $\mathcal{A}$ to $\mathcal{F}$.

If  $B(\mathcal{A}_{r-1})$ is not a subset of $\mathcal{F}$  then we can always choose $X_r  \in B(\mathcal{A}_{r-1})-\mathcal{F}$, so that $\mathcal{A}_{r}=\mathcal{A}_{r-1}-\{X_r\}$  is  union-closed by Lemma 2  and $\mathcal{F} \subset\mathcal{A}_{r}$.

So the condition that may stop this process is when it exists $q$ such that $B(\mathcal{A}_{q-1})\subseteq \mathcal{F}$. 

In this case we must have that for all $k\geq q$ $ B(\mathcal{A}_{q-1})\subseteq  B(\mathcal{A}_{k})$ by Lemma 3.

As a consequence $ B(\mathcal{A}_{q-1})\subseteq  B(\mathcal{A}_{t})$.  Let $X \in \mathcal{A}_{q-1}-\mathcal{A}_t$, by Lemma 1 it exists $\mathcal{T}=\{T_1;\dots T_r\} \subset B(\mathcal{A}_{q-1})$ such that  $X =\cup \mathcal{T}$.

 But $ B(\mathcal{A}_{q-1})\subseteq B(\mathcal{A}_{t})$, by hypothesis $\mathcal{A}_t=\mathcal{F}$  union-closed set , then $X \in \mathcal{A}_t$, so we have found a contradiction.

In conclusion we can always choose $\{X_q\}$ so that $\mathcal{A}_q$ is a  union-closed set and in this way we can build an union-closed sequence from $\mathcal{A}$ to $\mathcal{F}$.
\end{proof}

A trivial example of union-closed sequence from $\mathcal{A}$ to $\mathcal{F}$ is a sequence in which the elements $X_1, \ldots X_t$ have the property that $|X_i|\leq |X_j|$ if $i<j$.
Indeed we  can build a union-closed sequence starting from the end.
Consider  $X_t$ such that  for all $i$ we have $|X_t|\geq|X_i|$, it must be $\mathcal{F}\cup \{X_t\}$ is union-closed, as for all $P\in \mathcal {F}$, $P\cup X_t \in \mathcal {F}\cup \{X_t\}$. 

Now we define $\mathcal{D}= \mathcal{A}-\mathcal{F}$, $\mathcal{D}$ is the set of the elements eliminated by $\mathcal{A}$ in a sequence from $\mathcal{A}$ to $\mathcal{F}$.

\begin{lemma}
For all $j \in \cup \mathcal{A}$, $\mathcal{ F}\cup \mathcal{D}^j$ is a union-closed set.
 \end{lemma}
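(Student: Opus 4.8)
The plan is to first simplify the description of $\mathcal{F} \cup \mathcal{D}^j$. Since $\mathcal{D} = \mathcal{A} - \mathcal{F}$, the set $\mathcal{D}^j$ of eliminated members containing $j$ is exactly $\mathcal{D}^j = \{X \in \mathcal{A} : j \in X\} - \mathcal{F} = \mathcal{A}^j - \mathcal{F}$. Using the elementary identity $A \cup (B - A) = A \cup B$, this gives
\[ \mathcal{F} \cup \mathcal{D}^j = \mathcal{F} \cup (\mathcal{A}^j - \mathcal{F}) = \mathcal{F} \cup \mathcal{A}^j, \]
so the family in question is simply $\mathcal{F}$ together with \emph{all} non-empty subsets of $[n]$ that contain $j$. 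Recognizing this reduction is the conceptual heart of the argument; once we have it, union-closedness becomes almost immediate.

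Next I would verify union-closedness directly by taking arbitrary $X, Y \in \mathcal{F} \cup \mathcal{A}^j$ and checking $X \cup Y \in \mathcal{F} \cup \mathcal{A}^j$ in two cases. If both $X$ and $Y$ lie in $\mathcal{F}$, then $X \cup Y \in \mathcal{F}$ because $\mathcal{F}$ is union-closed by hypothesis, and we are done. Otherwise at least one of them fails to lie in $\mathcal{F}$; being in $\mathcal{F} \cup \mathcal{A}^j$, that one, say $X$, must lie in $\mathcal{A}^j$, meaning $j \in X$. Then $j \in X \cup Y$, and since $X \cup Y$ is a non-empty subset of $[n]$ it belongs to $\mathcal{A}^j$. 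In either case $X \cup Y \in \mathcal{F} \cup \mathcal{A}^j$, which is what we wanted.

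I do not expect any serious obstacle here: the statement is essentially a bookkeeping lemma whose only subtlety is the set-theoretic identity $\mathcal{F} \cup \mathcal{D}^j = \mathcal{F} \cup \mathcal{A}^j$. The one point to keep in mind is that $\mathcal{A}^j$ is closed under taking unions \emph{with any member of $\mathcal{A}$}, so the presence of $j$ in either argument already forces $j$ into the union; this is exactly what makes the second case go through regardless of whether the other set contains $j$ or merely belongs to $\mathcal{F}$. With this observation the proof is complete, and the degenerate possibility $\mathcal{D}^j = \emptyset$ (when $\mathcal{A}^j \subseteq \mathcal{F}$) is covered automatically.
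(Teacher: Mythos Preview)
Your proof is correct and follows essentially the same two-case argument as the paper: both of you split on whether $X,Y\in\mathcal{F}$ or at least one set contains $j$, and in the latter case conclude that the union contains $j$ and hence lies in the family. Your preliminary rewriting $\mathcal{F}\cup\mathcal{D}^j=\mathcal{F}\cup\mathcal{A}^j$ is a nice cosmetic simplification that the paper leaves implicit (the paper instead argues $X\cup Y\notin\mathcal{D}-\mathcal{D}^j$), but the underlying idea is identical.
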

\begin{proof}
Let $X,Y \in \mathcal{ F}\cup\mathcal{ D}^j$ we have two cases:
 \begin{enumerate}
	\item $X,Y \in \mathcal{ F}$, as $\mathcal{F}$ is a union-closed $X\cup Y \in \mathcal{F}$ and then $X\cup Y \in \mathcal{ F}\cup\mathcal{ D}^j$.

	\item $X\in \mathcal{F}\cup \mathcal{D}^j$ and  $Y \in \mathcal{ D}^j$, as a consequence $j\in X\cup Y \notin (\mathcal{D}-\mathcal{D}^j)$ and then  $X\cup Y \in \mathcal{ F}\cup\mathcal{ D}^j$.
\end{enumerate}
\end{proof}

By the previous Lemma we can state the following definition

\begin{definition}
Let $i$ be such that $\mathcal{D}^i\neq \emptyset$.
A union-closed sequence from $\mathcal{A}$ to $\mathcal{F}$ defined by $\mathcal{D}= \{X_1; \dots X_{|\mathcal{D}|}\}$ is said  to be an \textbf{ ideal sequence} if and only if
\begin{enumerate}
          \item $\mathcal{A}_0=\mathcal{A}$
	\item for all $l\leq |\mathcal{D}|-|\mathcal{D}^i|$ we have $i \notin X_l$ and $\mathcal{A}_l= \mathcal{A}_{l-1}-\{X_l\}$.
         
	\item for all $s> |\mathcal{D}|-|\mathcal{D}^i|$ we have $i \in X_s $  and $\mathcal{A}_s= \mathcal{A}_{s-1}-\{X_s\}$.
          \item  $\mathcal{A}_{|\mathcal{D}|}=\mathcal{F}$
\end{enumerate}
\end{definition}

\begin{lemma}
For all union-closed sets $\mathcal{F}\subseteq \mathcal{A}$  there is an ideal sequence from $\mathcal{A}$ to $\mathcal{F}$.
 \end{lemma}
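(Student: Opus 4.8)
The plan is to cut the deletion process in two at the intermediate family $\mathcal{F}\cup\mathcal{D}^i$, removing first every set of $\mathcal{D}$ that avoids $i$ and only afterwards every set of $\mathcal{D}$ that contains $i$. Fix $i$ with $\mathcal{D}^i\neq\emptyset$. The crucial observation is that $\mathcal{F}\cup\mathcal{D}^i$ is union-closed by Lemma 4, and that it sits between the two families of interest: $\mathcal{F}\subseteq\mathcal{F}\cup\mathcal{D}^i\subseteq\mathcal{A}$. Moreover the two set-differences split the deleted sets exactly the way the definition of an ideal sequence demands, namely $\mathcal{A}-(\mathcal{F}\cup\mathcal{D}^i)=\mathcal{D}-\mathcal{D}^i$ is precisely the collection of eliminated sets avoiding $i$, while $(\mathcal{F}\cup\mathcal{D}^i)-\mathcal{F}=\mathcal{D}^i$ is precisely the collection of eliminated sets containing $i$.

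For the first phase I would apply Theorem 2 to the union-closed family $\mathcal{F}\cup\mathcal{D}^i$, obtaining a union-closed sequence from $\mathcal{A}$ to $\mathcal{F}\cup\mathcal{D}^i$. Every set deleted along this sequence lies in $\mathcal{D}-\mathcal{D}^i$ and hence avoids $i$; since $|\mathcal{D}-\mathcal{D}^i|=|\mathcal{D}|-|\mathcal{D}^i|$, this realizes conditions (1) and (2) in the definition of an ideal sequence. For the second phase I would produce a union-closed sequence from $\mathcal{F}\cup\mathcal{D}^i$ down to $\mathcal{F}$ that deletes exactly the sets of $\mathcal{D}^i$, each of which contains $i$; this realizes conditions (3) and (4). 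Concatenating the two sequences yields one union-closed sequence from $\mathcal{A}$ to $\mathcal{F}$ in which the sets containing $i$ are all removed after those avoiding $i$, which is by definition an ideal sequence.

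The main obstacle is that Theorem 2 is stated only for sequences that start at the full universe $\mathcal{A}$, whereas the second phase must start at $\mathcal{F}\cup\mathcal{D}^i$. I would overcome this by re-running the argument behind Theorem 2 directly in the family $\mathcal{F}\cup\mathcal{D}^i$: as long as the current union-closed family $\mathcal{H}$ with $\mathcal{F}\subseteq\mathcal{H}\subseteq\mathcal{F}\cup\mathcal{D}^i$ satisfies $\mathcal{H}\neq\mathcal{F}$, Lemma 1 guarantees that $\mathcal{H}$ has a basis element outside $\mathcal{F}$ (otherwise every member of $\mathcal{H}$ would be a union of elements of $\mathcal{F}$ and hence lie in $\mathcal{F}$, forcing $\mathcal{H}=\mathcal{F}$), and that basis element necessarily lies in $\mathcal{H}-\mathcal{F}\subseteq\mathcal{D}^i$. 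Deleting it keeps the family union-closed by Lemma 2, and iterating drives $\mathcal{H}$ down to $\mathcal{F}$ while removing only sets of $\mathcal{D}^i$. This is exactly the content of Theorem 2 with $\mathcal{A}$ replaced by the arbitrary union-closed family $\mathcal{F}\cup\mathcal{D}^i$, and the proof there uses no property of $\mathcal{A}$ beyond its being union-closed, so the generalization is immediate. Supplying the second phase this way completes the construction of the ideal sequence.
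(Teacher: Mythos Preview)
Your proof is correct and follows the same two-phase strategy as the paper: split the deletion at the intermediate union-closed family $\mathcal{F}\cup\mathcal{D}^i$ (Lemma~4), use Theorem~2 for the first phase, and then handle the descent from $\mathcal{F}\cup\mathcal{D}^i$ to $\mathcal{F}$ separately.

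The only genuine difference is in the second phase. You re-run the basis-removal mechanism behind Theorem~2, observing (correctly) that nothing in that argument uses any special property of $\mathcal{A}$ beyond union-closedness, so it applies verbatim with $\mathcal{F}\cup\mathcal{D}^i$ as the starting family. The paper instead gives an ad hoc construction: it lists $\mathcal{D}^i=\{X_1,\dots,X_r\}$ in non-decreasing order of cardinality and removes the $X_p$ in that order, arguing that if some intermediate $\mathcal{A}_v$ failed to be union-closed then the witnesses $Y,T$ with $Y\cup T=X_v$ would have strictly smaller cardinality than every remaining $X_k$, hence already lie in $\mathcal{F}$, contradicting that $\mathcal{F}$ is union-closed. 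Your route has the advantage of reusing existing machinery and making explicit that Theorem~2 is really a statement about arbitrary union-closed starting families; the paper's route is more self-contained and avoids re-invoking Lemmas~1--3.
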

\begin{proof}

By Lemma 4, $\mathcal{ F}\cup \mathcal{D}^i$ is a union-closed set. Thus,  by Theorem 2, we can find an union-closed sequence from $\mathcal{A}$ to $\mathcal{F}\cup\mathcal{ D}^i$.

Now we can build a union-closed sequence from $\mathcal{F}\cup \mathcal{D}^i$ to $\mathcal{F}$. Suppose $ \mathcal{D}^i=\{X_1;\ldots X_r\}$  with the property $|X_{p-1}|\leq |X_p|$. 
 For all $s> |\mathcal{D}|-|\mathcal{D}^i|$ we have $\mathcal{A}_{|\mathcal{D}|-|\mathcal{D}^i|+1}= \mathcal{F}\cup \mathcal{D}^i-\{X_1\}$ and $\mathcal{A}_s= \mathcal{A}_{s-1}-\{X_{s}\}$.
 
Suppose for the sake of contradiction that in this sequence, $\mathcal{A}_v$ is not union-closed: we can find $Y, T \in \mathcal{A}_v$ such that $Y\cup T =X_v\notin \mathcal{A}_v$, obviously we must have $|Y|, |T|< |X_v|$.  

By construction of sequence for all $k>v$ $ |X_k|\geq |X_v|$, then  $Y, T\in \mathcal{F}$ and $\mathcal{F}$ is not union-closed, hence there is a contradiction. 
As a consequence we have described a way to build an ideal sequence.
\end{proof}

\begin{definition}
For all sets $Y$ and $X$ the set $E_X(Y)=\{T\in \mathcal{A}|  Y \subseteq  T \subseteq Y\cup X\}$ is called  \textbf{extension} of $Y$ on $X$.
\end{definition}

\begin{definition}
$X\in \mathcal{D}$ is said to be  \textbf{vincolated} if $\mathcal{F}\cup \{X\}$ is not union closed. In other words $X \in\mathcal{D}$ is said vincolated if $\exists Y \in \mathcal{F}$ such that $X \cup Y \in \mathcal{D}$.
\end{definition}

\begin{definition}
$X\in \mathcal{D}$ is said to be vincolated  to $Y\in \mathcal{D}$ if $\mathcal{F}\cup \{X\}$ is not union closed, but  $\mathcal{F}\cup\{X; Y\}$ is union closed. 
\end{definition}

\begin {theorem}
Let $\mathcal{F}\subseteq \mathcal{A}$ be a union-closed set, let $\mathcal{D}=\mathcal{A}-\mathcal{F}$, then if each $X\in \mathcal{D}-\mathcal{D}^i$ is vincolated, then it is possible to find an element $Y\in \mathcal{D}-\mathcal{D}^i$ vincolated to a non-vincolated $R\in\mathcal{D}^i$.
\end {theorem}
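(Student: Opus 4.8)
The plan is to reformulate the desired conclusion in terms of \emph{escaping unions} and then locate the pair $(Y,R)$ by an extremal argument. First I would unwind the definitions: for $Y\in\mathcal{D}$, the family $\mathcal{F}\cup\{Y\}$ fails to be union-closed exactly when some $P\in\mathcal{F}$ gives $Y\cup P\in\mathcal{D}$ with $Y\cup P\neq Y$; I will call such a set $Y\cup P$ an \emph{escaping union} of $Y$. A direct check shows that $\mathcal{F}\cup\{Y,R\}$ is union-closed if and only if (a) $R$ is non-vincolated and (b) every escaping union of $Y$ equals $R$ (condition (b) also forces $Y\subseteq R$, since some escaping union exists and equals $R\supseteq Y$). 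Hence the theorem reduces to producing $Y\in\mathcal{D}\setminus\mathcal{D}^i$ and a non-vincolated $R\in\mathcal{D}^i$ whose escaping unions all coincide with $R$. In the intended setting $\mathcal{D}^i\neq\emptyset$ and $\mathcal{D}\setminus\mathcal{D}^i\neq\emptyset$; since every element of $\mathcal{D}\setminus\mathcal{D}^i$ is vincolated, the collection $\mathcal{E}$ of all escaping unions (over all $Y\in\mathcal{D}\setminus\mathcal{D}^i$) is non-empty.

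The central positive step, which I would isolate first, is: any inclusion-maximal $R\in\mathcal{E}$ lies in $\mathcal{D}^i$ and is non-vincolated. Indeed, fix a witness $Y_0\in\mathcal{D}\setminus\mathcal{D}^i$ and $P_0\in\mathcal{F}$ with $R=Y_0\cup P_0$. For any $P\in\mathcal{F}$ we have $R\cup P=Y_0\cup(P_0\cup P)$ with $P_0\cup P\in\mathcal{F}$, so $R\cup P$ is again a union of $Y_0$ with a member of $\mathcal{F}$; if it lay in $\mathcal{D}$ and strictly contained $R$, it would be an element of $\mathcal{E}$ properly above $R$, contradicting maximality. Thus $R\cup P=R$ or $R\cup P\in\mathcal{F}$ for every $P\in\mathcal{F}$, i.e.\ $R$ is non-vincolated. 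Finally $R\in\mathcal{D}^i$, since otherwise $R\in\mathcal{D}\setminus\mathcal{D}^i$ would be vincolated by hypothesis, contradicting that $\mathcal{F}\cup\{R\}$ is union-closed. This simultaneously shows that non-vincolated members of $\mathcal{D}^i$ exist and are realized as escaping unions.

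It remains to choose the witness $Y$ so that its escaping union is \emph{unique}. I would fix such a maximal $R\in\mathcal{E}$, write $R=Y\cup P$, and pick $Y$ inclusion-maximal among all sets having $R$ as an escaping union. Suppose $Y$ had a second escaping union $D'=Y\cup P'\neq R$. Then $R\cup D'=Y\cup(P\cup P')$ is an escaping-union candidate of $Y$, so by maximality of $R$ it either equals $R$ or lies in $\mathcal{F}$. If $R\cup D'=R$ then $D'\subseteq R$, and if moreover $i\notin D'$ one checks $D'\cup P=R$ (as $D'\supseteq Y$ and $D',P\subseteq R$), so $D'$ is itself a witness of $R$ strictly above $Y$, contradicting maximality of $Y$. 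This disposes of the ``interior, $i$-free'' alternatives. The main obstacle is ruling out the two surviving possibilities: an escaping union $D'$ with $i\in D'$ and $Y\subsetneq D'\subsetneq R$, and an escaping union $D'$ with $R\cup D'\in\mathcal{F}$ strictly above $R$. I expect this uniqueness step to be the crux of the whole theorem.

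To overcome it I would not fix $R$ first but refine the extremal choice: among all admissible pairs $(Y,R)$ (with $Y\in\mathcal{D}\setminus\mathcal{D}^i$, $R\in\mathcal{D}^i$ non-vincolated, and $R$ an escaping union of $Y$) I would maximize $Y$ and then minimize the gap $R\setminus Y$, exploiting that every stray escaping union $D'$ is again a deleted set that is \emph{vincolated} when $i\notin D'$ and is dominated by a larger member of $\mathcal{E}$ when $i\in D'$. Pushing such a $D'$ upward through its own escaping unions should yield either a strictly larger witness of $R$ or a strictly larger element of $\mathcal{E}$, each contradicting the chosen extremal data; as all chains of finite sets terminate, this descent forces the escaping union of the optimal $Y$ to be unique and equal to $R$, giving (a) and (b). Organizing this finite descent so that every branch returns a genuine contradiction is the delicate part on which I would spend the most effort.
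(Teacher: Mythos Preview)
Your reformulation in terms of ``escaping unions'' is correct, and your argument that an inclusion-maximal $R\in\mathcal{E}$ is non-vincolated and lies in $\mathcal{D}^i$ is clean and valid. The gap is exactly where you locate it: you do not establish that the chosen $Y$ has a \emph{unique} escaping union. The two residual cases you single out (an escaping union $D'$ with $i\in D'$ and $Y\subsetneq D'\subsetneq R$, and one with $R\cup D'\in\mathcal{F}$) are genuine obstructions to the pair-extremal strategy you sketch, and the ``finite descent'' you propose is not an argument but a hope; nothing you have written prevents $Y$ from having several escaping unions inside $\mathcal{D}^i$ that are pairwise incomparable and all sit below the maximal $R$, or that merge with $R$ into an element of $\mathcal{F}$.

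The paper closes this gap by choosing the extremal object differently and adding one observation you are missing. Instead of first maximizing $R$, it takes $Y$ of \emph{maximum cardinality} in $\mathcal{D}\setminus\mathcal{D}^i$. The key tool is the \emph{extension} interval $E_P(Y)=\{T:\,Y\subseteq T\subseteq Y\cup P\}$: if $P\in\mathcal{F}$ and $Y\cup P\in\mathcal{D}$, then every $T\in E_P(Y)$ satisfies $T\cup P=Y\cup P\in\mathcal{D}$, so $T\notin\mathcal{F}$, i.e.\ the whole interval $E_P(Y)$ lies in $\mathcal{D}$. In particular, for each $a\in(Y\cup P)\setminus Y$ the set $Y\cup\{a\}$ is in $\mathcal{D}$ and has strictly larger cardinality than $Y$; by the maximality of $|Y|$ in $\mathcal{D}\setminus\mathcal{D}^i$ this forces $i\in Y\cup\{a\}$, hence $a=i$. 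Thus every escaping union of $Y$ equals $Y\cup\{i\}$, which simultaneously produces $R=Y\cup\{i\}\in\mathcal{D}^i$ and gives uniqueness in one stroke. The same interval trick, applied to a hypothetical escaping union of $R$, shows $R$ is non-vincolated. Your approach recovers the non-vincolated $R$ elegantly but lacks this interval-in-$\mathcal{D}$ observation, which is precisely what collapses the gap $R\setminus Y$ to $\{i\}$ and eliminates the branching you could not control.
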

\begin{proof}

 Let $Y\in \mathcal{D}-\mathcal{D}^i$ of maximum cardinality on $\mathcal{D}-\mathcal{D}^i$. 
By hypothesis  $Y$ is vincolated, then it is possible to find $X \in \mathcal{F}$ such that $Y \cup X=R\in \mathcal{D}$. 
Moreover, as $Y$ has maximum cardinality on $\mathcal{D}-\mathcal{D}^i$, we must have $R\in \mathcal{D}^i$.
 As a consequence $E_{X}(Y) $ must be a subset of $\mathcal{D}$, otherwise $\mathcal{F}$ is not union-closed.
Indeed for all $T \in E_{X}(Y)$ we have $T \cup X=R$.
As we suppose $Y$ has maximum cardinality we must conclude $Y\cup \{i\}=R$.

Moreover we have to observe that $R$ could not be the set of maximum cardinality in $\mathcal{D}^i$. In this case for the sake of contradiction suppose that $ R$ could be vincolated. It means that we must have $K\in\mathcal{ D}^i$, then we can find $P\in \mathcal{F}$ such that $P \cup R=K$. But $\mathcal{F}$ is union-closed, then $P \cup X = P' \in \mathcal{F}$, then $Y \cup P' =K$ and $E_{P'}(Y) \subseteq \mathcal{D}$. As a consequence there must be an element $Y^*\in \mathcal{D}-\mathcal{D}^i$ such that $|Y^*|>|Y|$ and we find a contradiction. 

As a conclusion $R$ is not vincolated and  $\mathcal{F}\cup \{R\}$ is union-closed. 
We want to prove that $\mathcal{F}\cup \{R; Y\}$ is union-closed. 
Suppose it is not. It would imply that $Y$ is vincolated to  $K \in\mathcal{ D}^i$ with $Y\subset R=Y\cup\{i\} \subset K$, then, by definition,  we could find $S\in \mathcal{F}\cup \{R\}$ such that $Y \cup S=K\in \mathcal{D}$.
We have two cases:
\begin{enumerate}
\item $S\neq R$ thus $R\cup S=K\in \mathcal{D} $, then $R$ is vincolated to $K$, that would be a contradiction.
\item$S=R$ thus $K=R$ and we find a contradiction because $R\in \mathcal{F}\cap \mathcal{D}=\emptyset$.
\end{enumerate}
\end{proof}

\begin {definition}
Let $\mathcal{D}=\mathcal{A}-\mathcal{F}$ and let $\mathcal{D}^i\neq\mathcal{D}$. A sequence $\mathcal{A}_0, \mathcal{A}_1 \dots, \mathcal{A}_t$  from $\mathcal{A}$ to $\mathcal{F}$ is said  \textbf{optimal for $i$}  if and only if 
\begin{enumerate}
\item $i\in X_t$,
\item $i \notin X_{t-1}$  
\item Let $\mathcal{A}_0, \mathcal{A}_1 \dots,\mathcal{ A}_{t-2}$ is an ideal sequence from $\mathcal{A}$ to $\mathcal{F}\cup\{X_t; X_{t-1}\}.$ 
\end{enumerate}
\end{definition}

\begin{lemma}
 If $\mathcal{D}^i\neq \mathcal{D}$, it is always possible to build an optimal sequence for $i$ on $\mathcal{D}$.
\end{lemma}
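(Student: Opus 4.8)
The plan is to reduce the construction of an optimal sequence to the choice of the last two deleted sets. Concretely, I want to exhibit $X_{t-1}, X_t \in \mathcal{D}$ with $i \notin X_{t-1}$ and $i \in X_t$ such that both $\mathcal{F} \cup \{X_{t-1}; X_t\}$ and $\mathcal{F} \cup \{X_t\}$ are union-closed. Granting this, $\mathcal{F} \cup \{X_{t-1}; X_t\}$ is a union-closed set, so by Lemma 5 there is an ideal sequence $\mathcal{A}_0, \dots, \mathcal{A}_{t-2}$ from $\mathcal{A}$ to $\mathcal{F} \cup \{X_{t-1}; X_t\}$. Appending the deletion of $X_{t-1}$ (which gives the union-closed $\mathcal{A}_{t-1} = \mathcal{F} \cup \{X_t\}$) and then of $X_t$ (which gives $\mathcal{A}_t = \mathcal{F}$) yields a union-closed sequence satisfying the three defining conditions of optimality. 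Throughout I assume $\mathcal{D}^i \neq \emptyset$; together with the hypothesis $\mathcal{D}^i \neq \mathcal{D}$ this guarantees that both a set containing $i$ and a set avoiding $i$ are available to be deleted last.

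A preliminary observation supplies the set $X_t$: the element $R_0 \in \mathcal{D}^i$ of maximum cardinality is never vincolated. Indeed, were $R_0 \cup P \in \mathcal{D}$ for some $P \in \mathcal{F}$ with $R_0 \cup P \neq R_0$, then $R_0 \cup P$ would be a member of $\mathcal{D}^i$ (it contains $i$) of strictly larger cardinality, contradicting the choice of $R_0$. Hence $\mathcal{F} \cup \{R_0\}$ is union-closed, which is exactly the second requirement above.

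I would then split according to whether every set of $\mathcal{D} - \mathcal{D}^i$ is vincolated. In the first case Theorem 3 applies and produces $Y \in \mathcal{D} - \mathcal{D}^i$ vincolated to a non-vincolated $R \in \mathcal{D}^i$; taking $X_{t-1} = Y$ and $X_t = R$, the definition of $Y$ being vincolated to $R$ gives that $\mathcal{F} \cup \{Y; R\}$ is union-closed, while $R$ being non-vincolated gives that $\mathcal{F} \cup \{R\}$ is union-closed, so both requirements hold. In the remaining case there is a non-vincolated $W \in \mathcal{D} - \mathcal{D}^i$, so $\mathcal{G} = \mathcal{F} \cup \{W\}$ is union-closed; since $i \notin W$, the $i$-part of the deletion set $\mathcal{A} - \mathcal{G}$ is again $\mathcal{D}^i$, and applying the preliminary observation to $\mathcal{G}$ shows that $R_0$ is non-vincolated with respect to $\mathcal{G}$, i.e. $\mathcal{F} \cup \{W; R_0\}$ is union-closed. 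Taking $X_{t-1} = W$ and $X_t = R_0$ then fulfils both requirements, the second already being guaranteed by the observation applied to $\mathcal{F}$ itself.

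The routine part is the verification that prepending the ideal sequence of Lemma 5 and appending the two deletions really yields a union-closed sequence meeting the optimality conditions; this is immediate once the two requirements on $X_{t-1}, X_t$ are secured. The main obstacle is the second case: one must find a partner $X_t \in \mathcal{D}^i$ that is simultaneously non-vincolated with respect to $\mathcal{F}$ (so that $\mathcal{F} \cup \{X_t\}$ stays union-closed) and compatible with $W$ (so that $\mathcal{F} \cup \{W; X_t\}$ is union-closed). The point I would be most careful about is that the single choice $X_t = R_0$ meets both demands, which rests on the maximum-cardinality observation being applied to $\mathcal{F}$ and to $\mathcal{G}$ at once; I would also check that the degenerate possibility $\mathcal{D}^i = \emptyset$ is genuinely excluded by the standing hypotheses.
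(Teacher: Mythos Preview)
Your argument is correct and follows the same overall structure as the paper's proof: the identical case split on whether every element of $\mathcal{D}-\mathcal{D}^i$ is vincolated, with Case~1 handled verbatim via Theorem~3. The only difference is tactical, in Case~2: the paper first builds an ideal sequence from $\mathcal{A}$ to $\mathcal{F}\cup\{X^*\}$, takes the last deleted set (which necessarily contains $i$) as $X_t$, and then swaps the final two deletions, checking that $\mathcal{F}\cup\{X_t\}$ is union-closed via the observation $i\in X_t$, $i\notin X^*$; you instead fix $X_t=R_0$ as the maximum-cardinality member of $\mathcal{D}^i$ and use that single maximality observation twice (once for $\mathcal{F}$, once for $\mathcal{G}=\mathcal{F}\cup\{W\}$) to get both required union-closures directly. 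Your route is slightly more explicit and avoids the swap, but the two arguments are essentially the same.
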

\begin{proof}

We consider two different cases:
\begin{enumerate}
\item  if each $X\in \mathcal{D}-\mathcal{D}^i$ is vincolated, then by Theorem 3 it is possible to find an element $Y\in \mathcal{D}-\mathcal{D}^i$ vincolated to a non-vincolated $R\in\mathcal{D}^i$.
Thus we can build an  ideal sequence $\mathcal{A}_0, \mathcal{A}_1 \dots,\mathcal{ A}_{t-2}$  from $\mathcal{A}$ to $\mathcal{F}\cup\{Y; R\}$ and an optimal sequence  from $\mathcal{A}$ to $\mathcal{F}$ with $R= X_t$ and $Y= X_{t-1}$.

\item Otherwise it means that it exists an element $X^*\in \mathcal{D}-\mathcal{D}^i$ not vincolated, that means $\mathcal{F}\cup \{X^*\} $ is union closed.

 As $\mathcal{F}\cup \{X^*\} $ is union closed, by Lemma 5 we have   an  ideal sequence $\mathcal{A}_0, \mathcal{A}_1 \dots,\mathcal{ A}_{t-1}$  from $\mathcal{A}$ to $\mathcal{F}\cup \{X^*\}$, note that we must have  $\mathcal{F}\cup \{X^*; X_{t-1}\}$ union closed with $i\in X_{t-1}$ by definition of ideal sequence.

As a consequence consider the ideal sequence   $\mathcal{A}_0, \mathcal{A}_1 \dots,\mathcal{ A}_{t-2}$  from $\mathcal{A}$ to  $\mathcal{F}\cup \{X^*; X_{t-1}\}$.

We set $\mathcal{ A'}_{t-1}=\mathcal{ A}_{t-2}- \{X^*\}$. $\mathcal{ A'}_{t-1}$ must be union closed.   If $\mathcal{ A'}_{t-1}$ were not union closed,  as we know  $\mathcal{F}$ is union closed, we would have $S\in \mathcal{F}$ such that $S\cup X_{t-1}=X^*$. That is impossible as $i \in X_{t-1}$ and $i\notin X^*$.

At the end we set $\mathcal{ A'}_{t}=\mathcal{ A'}_{t-1}- \{X_{t-1}\}$. $\mathcal{ A'}_{t}=\mathcal {F}$ then it is union closed.

In this way the sequence  $\mathcal{A}_0, \mathcal{A}_1 \dots,\mathcal{ A}_{t-2}, \mathcal{ A'}_{t-1}, \mathcal{ A'}_{t} $ is an optimal sequence  from $\mathcal{A}$ to  $\mathcal{F}$.
\end{enumerate}
\end{proof}

\begin {definition}
Let $\mathcal{D}=\mathcal{A}-\mathcal{F}$, $i$ is said  \textbf{quasiminimal} on $\mathcal{D}$  for  a set   $\mathcal{Y}=\{Y_1;Y_2\}\subset \mathcal {F}$ if and only if
\begin{enumerate}
\item $i$ belongs to $Y_2$
\item $ i$ does not belong to $Y_1$
\item   $ i$ is minimal on  $\mathcal{D} \cup\mathcal{Y}$
\item there exists an optimal sequence  $\mathcal{A}_0, \mathcal{A}_1 \dots, \mathcal{A}_{t+2}$  from $\mathcal{A}$ to $\mathcal{F} -\mathcal {Y}$ such that $\mathcal{A}_{t+1}=\mathcal{A}_t-\{Y_1\}$ and $\mathcal{A}_{t+2}=\mathcal{A}_{t+1}-\{Y_2\}$. As the optimal sequence ends with $\mathcal{F} -\mathcal {Y}$, this set is union-closed.
\end{enumerate}  
\end{definition}

\begin {theorem}
For all $\mathcal{D}$, it exists $i$ such that if it exists a set $\mathcal{Y}=\{Y_1;Y_2\}$  $i$  quasiminimal on $\mathcal{D}$  for  $\mathcal{Y}$, then $2|(\mathcal{D}\cup\mathcal{Y})^i|\leq |(\mathcal{D}\cup\mathcal{Y})|+1.$

\end {theorem}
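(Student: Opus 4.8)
The plan is to convert the claimed inequality into a pure counting statement about the deleted family $\mathcal{D}':=\mathcal{D}\cup\mathcal{Y}$ and then exhibit an almost-injection. First I would record the arithmetic forced by the quasiminimal data: since $Y_1,Y_2\in\mathcal{F}$ (so neither lies in $\mathcal{D}$), with $i\notin Y_1$ and $i\in Y_2$, we get $\card{\mathcal{D}\cup\mathcal{Y}}=\card{\mathcal{D}}+2$ and $\card{(\mathcal{D}\cup\mathcal{Y})^i}=\card{\mathcal{D}^i}+1$. Substituting, the target inequality $2\card{(\mathcal{D}\cup\mathcal{Y})^i}\le\card{\mathcal{D}\cup\mathcal{Y}}+1$ is equivalent to $2\card{\mathcal{D}^i}\le\card{\mathcal{D}}+1$; and writing $a=\card{(\mathcal{D}')^i}$ and $b=\card{\mathcal{D}'}-a$ for the number of deleted sets that do, respectively do not, contain $i$, it is also equivalent to the clean bound $a\le b+1$.

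To prove $a\le b+1$ I would use the map $\phi\colon(\mathcal{D}')^i\to\mathcal{A}-\mathcal{A}^i$ given by $\phi(X)=X-\{i\}$, which is clearly injective. Set $\mathcal{G}:=\mathcal{F}-\mathcal{Y}=\mathcal{A}-\mathcal{D}'$, which is union-closed by the last clause of the definition of quasiminimal. Then $\phi$ restricts to an injection of $(\mathcal{D}')^i$ into $\mathcal{D}'-(\mathcal{D}')^i$ \emph{except} on the ``escaping'' sets, i.e.\ those $X\in(\mathcal{D}')^i$ for which $X-\{i\}$ fails to lie in $\mathcal{D}'$; these are exactly $X=\{i\}$ (whose image $\emptyset$ is not in $\mathcal{A}$) together with the sets $X$ with $X-\{i\}\in\mathcal{G}$. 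Equivalently, under the correspondence $X\leftrightarrow X-\{i\}$, the escaping sets correspond to $\{i\}$ and to the sets $Z\in\mathcal{G}$ with $i\notin Z$ and $Z\cup\{i\}\notin\mathcal{G}$. If the total number of escaping sets is at most $1$, then $\phi$ injects $(\mathcal{D}')^i$ minus one element into $\mathcal{D}'-(\mathcal{D}')^i$, giving $a-1\le b$, as required.

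Everything therefore reduces to the single claim that \textbf{at most one set escapes}, and this is the step I expect to be the main obstacle. Here I would bring in the two hypotheses not yet used: the minimality of $i$ on $\mathcal{D}'$ (clause 3 of quasiminimal) and the special role of the pair at the top of the optimal sequence, where in the case produced by Theorem 3 and Lemma 6 one has $Y_2=Y_1\cup\{i\}$, so that $\phi(Y_2)=Y_1\in\mathcal{D}'-(\mathcal{D}')^i$ and $Y_2$ never escapes. The guiding idea is that union-closedness of $\mathcal{G}$ already blocks proliferation of escapes — if $Z_1,Z_2\in\mathcal{G}$ both avoid $i$, then so does $Z_1\cup Z_2\in\mathcal{G}$ — while minimality of $i$ should pin the only genuine escape to the bottom set $\{i\}$. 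Concretely I would argue by contradiction: assuming two distinct escaping sets, I would combine them via the union-closure of $\mathcal{G}$ and the extremal (maximal/minimal) cardinality choices inherited from the proof of Theorem 3, producing either a set of $\mathcal{D}-\mathcal{D}^i$ strictly larger than the maximal one fixed there, or an element smaller than $i$ in the relevant order, contradicting minimality.

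Finally, with $a\le b+1$ in hand, I would unwind the equivalence of the first paragraph to recover $2\card{(\mathcal{D}\cup\mathcal{Y})^i}\le\card{\mathcal{D}\cup\mathcal{Y}}+1$, the element $i$ being precisely the minimal element supplied by the quasiminimal hypothesis, which witnesses the required existential. I should stress that the whole weight of the argument rests on the escape count being exactly one: any union-closed configuration producing two \emph{independent} escapes would defeat the injection, so the decisive — and most fragile — point is to show that minimality of $i$ together with the union-closure of $\mathcal{G}=\mathcal{F}-\mathcal{Y}$ genuinely forbids this. An alternative route by induction on $\card{\mathcal{D}}$, transferring the inequality from $\mathcal{D}$ to $\mathcal{D}\cup\mathcal{Y}$ through the same algebraic equivalence, runs into an identical difficulty, so I would treat the escape-counting lemma as the true heart of the proof.
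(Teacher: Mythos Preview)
Your proposal diverges fundamentally from the paper's argument, which proceeds by induction on $|\mathcal{D}|$: at the inductive step the paper uses the optimal sequence to write $\mathcal{D}=\mathcal{D}_{t-1}\cup\{X_t,X_{t+1}\}$ and invokes the inductive hypothesis with the pair $\{X_t,X_{t+1}\}$ serving as a new $\mathcal{Y}$ for the smaller deletion family $\mathcal{D}_{t-1}$. Your route via the almost-injection $X\mapsto X\setminus\{i\}$ is entirely different, and its success hinges on the claim that \emph{at most one set escapes}.

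That claim is false. Take $n=6$, set $A=\{1,2,3\}$, $B=\{4,5,6\}$, and let
\[
\mathcal{G}=\{T:\emptyset\neq T\subseteq B\}\;\cup\;\{A\cup T:T\subseteq B\}.
\]
This $\mathcal{G}$ is union-closed, and $\mathcal{D}'=\mathcal{A}\setminus\mathcal{G}$ consists of all $X$ with $\emptyset\subsetneq X\cap A\subsetneq A$. Every $k\le 6$ satisfies $|(\mathcal{D}')^k|=24$, so $i=1$ is minimal on $\mathcal{D}'$. Choosing $Y_1=\{2,4,5,6\}$ and $Y_2=\{1,2,4,5,6\}$, one checks directly that $\mathcal{F}=\mathcal{G}\cup\{Y_1,Y_2\}$ is again union-closed and that all four clauses of the quasiminimal definition hold for $\mathcal{D}=\mathcal{D}'\setminus\{Y_1,Y_2\}$. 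Yet the escaping sets are precisely the eight sets $\{1\}\cup T$ with $T\subseteq B$, so $e=8$, not $e\le 1$. None of the heuristics you list (union-closure of $\mathcal{G}$, the relation $Y_2=Y_1\cup\{i\}$, the extremal choices in Theorem~3) rules this out.

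What your injection actually requires is the weaker bound $e\le m+1$, where $m$ counts the non-images $W\in\mathcal{D}'\setminus(\mathcal{D}')^i$ with $W\cup\{i\}\in\mathcal{G}$; in the example above $m=8$ as well, and indeed $a\le b+1$ holds. But you neither formulate nor prove $e\le m+1$, and comparing escapes against non-images directly looks to be of the same difficulty as the original inequality. As it stands the proposal has a genuine gap at exactly the point you flagged as ``most fragile''; the paper avoids this obstacle altogether by trading two sets at a time through the induction rather than attempting a global injection.
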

\begin{proof}

By induction on $|\mathcal{D}|$.
If  $|\mathcal{D}|=0$, for some $i$, if we can choose a set $\mathcal{Y}$, by definition of quasiminal we have $2|(\mathcal{D}\cup\mathcal{Y})^i|\leq |(\mathcal{D}\cup\mathcal{Y})|+1.$

If  $|\mathcal{D}|=1$: as $\mathcal {F}$ is union closed, we must have for some $j$ $\mathcal{D}=\{\{j\}\}$. As a consequence for all $i$ we can choose  a set $\mathcal{Y}$  $i$  quasiminimal on $\mathcal{D}$  for  $\mathcal{Y}$ we have $2|(\mathcal{D}\cup\mathcal{Y})^i|\leq |(\mathcal{D}\cup\mathcal{Y})|+1.$

If  $|\mathcal{D}|=2$, as $\mathcal {F}$ is union closed,  for some $j; k$ we must have $\mathcal{D}=\{\{k\};\{j\}\}$ or $\mathcal{D}=\{\{j\};\{k,j\}\}$. As a consequence for all $i$ we can choose  a set $\mathcal{Y}$  $i$  quasiminimal on $\mathcal{D}$  for  $\mathcal{Y}$ we have $2|(\mathcal{D}\cup\mathcal{Y})^i|\leq |(\mathcal{D}\cup\mathcal{Y})|+1.$

In general, let suppose the theorem true for $|\mathcal{D}|\leq t$ and we prove it for $|\mathcal{D}|=t+1$.
Particularly given a sequence  from $\mathcal{A}$ to $\mathcal{F}$, we indicate $\mathcal{D}_{j}=\{X_1, \ldots X_{j}\}$.
Consider two different cases:

\begin{enumerate}
\item Suppose that  for some $i$ that satisfay the theorem for  $\mathcal{D}_{t-1}$, $|\mathcal{D}^i|<|\mathcal{D}|$,  and by Lemma 6 we have $\mathcal{A}_0, \mathcal{A}_1 \dots,\mathcal{ A}_{t+1}$ optimal sequence for $i$.  If we could choose $\mathcal{Y}$ such that $i$  \textbf{quasiminimal} on $\mathcal{D}$  for  $\mathcal{Y}$, we could do as follows.
We split $\mathcal{D}_{t+1}=\mathcal{D}_{t-1}\cup\{X_t;X_{t+1}\}$, such that $i\notin X_{t}$ and $i\in X_{t+1}$. By definition $i$ quasiminimal on $\mathcal{D}_{t-1}$. Thus, by inductive hypothesys, $2|(\mathcal{D}_{t-1}\cup\{X_t;X_{t+1}\}\cup\mathcal{Y})^i|\leq |(\mathcal{D}\cup\mathcal{Y})|+1.$

\item Suppose $i$ satisfay the theorem for  $\mathcal{D}_{t-1}$  and $|\mathcal{D}^i|=|\mathcal{D}|$. Suppose  a  set $\mathcal{Y}$ exists such that $i$ quasiminimal on $\mathcal{D}$  for $\mathcal{Y}$. That means $i$ is minimal on $\mathcal{D}\cup\{Y_1;Y_2\}$, thus $i$ must be minimal in $\mathcal{D}\cup\{Y_1\}$,   as a result we have by definition $i$ quasiminimal on $\mathcal{D}_{t}$ for $\{Y_1;X_{t+1}\}$. 

Observe that $\mathcal{A}-(\mathcal{D}_t\cup\{Y_1\})$ is union-closed, because $\mathcal{A}-(\mathcal{D}_t\cup\{Y_1;X_{t+1}\})$ is union-closed and $i\in X_{t+1}$. On the other words for all $T\in \mathcal{A}-(\mathcal{D}_t\cup\{Y_1\})$. $T\cup X_{t+1} \in \mathcal{A}-(\mathcal{D}_t\cup\{Y_1\}).$
 
 As conclusion,    $2|(\mathcal{D}_{t+1}\cup \{Y_1;Y_2\})^i|=  2|(\mathcal{D}_{t}\cup \{Y_1;X_{t+1}\} \cup \{Y_{2}\})^i|= 2|\mathcal{D}^i_{t}|+4$. 
Moreover, by inductive hypothesys $2|(\mathcal{D}_{t}\cup \{Y_1;X_{t+1}\} \cup \{Y_{2}\})^i| \leq|\mathcal{D}_{t}|+1+2+2 $.As a result, as $2|\mathcal{D}^i_{t}|+4\leq |\mathcal{D}_{t}|+1+2+2$ and  $|\mathcal{D}_{t}^i| =|\mathcal{D}_{t}|$, we have $|\mathcal{D}_{t}|=1$ and we have just proved it at the inductive step.

\end{enumerate}
 \end{proof}

\begin {theorem}
Let $\mathcal{D}=\mathcal{A}-\mathcal{F}$ and let $j$ minimal on $\mathcal{D}$, with $|\mathcal{D}|>1$ . Then $2|\mathcal{D}^j|\leq |\mathcal{D}|+1$.
\end {theorem}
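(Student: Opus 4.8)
The plan is to read off Theorem 5 from Theorem 4 by supplying, for the prescribed minimal coordinate $j$, a witnessing pair $\mathcal{Y}=\{Y_1;Y_2\}$ and then unwinding the cardinalities. First I would set up the arithmetic dictionary. Since $|\mathcal{A}|=2^n-1$ and $|\mathcal{A}^k|=2^{n-1}$ for every $k$, we have $|\mathcal{D}|=2^n-1-|\mathcal{F}|$ and $|\mathcal{D}^k|=2^{n-1}-|\mathcal{F}^k|$; hence the target $2|\mathcal{D}^j|\le|\mathcal{D}|+1$ is equivalent to $|\mathcal{F}|\le 2|\mathcal{F}^j|$, which is exactly the assertion of Theorem 1 at the coordinate $j$. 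Moreover, for any $\mathcal{Y}=\{Y_1;Y_2\}\subset\mathcal{F}$ with $j\in Y_2$ and $j\notin Y_1$ we have $|\mathcal{D}\cup\mathcal{Y}|=|\mathcal{D}|+2$ and $|(\mathcal{D}\cup\mathcal{Y})^j|=|\mathcal{D}^j|+1$, so the conclusion of Theorem 4, namely $2|(\mathcal{D}\cup\mathcal{Y})^j|\le|\mathcal{D}\cup\mathcal{Y}|+1$, collapses to precisely $2|\mathcal{D}^j|\le|\mathcal{D}|+1$. Thus it suffices to produce a pair $\mathcal{Y}$ for which $j$ is quasiminimal and Theorem 4 applies with $i=j$.

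Next I would construct $\mathcal{Y}$. Assume first that $\mathcal{F}$ contains a set avoiding $j$ and a set containing $j$ (the two complementary degenerate cases are dealt with below). I would pull two sets back out of $\mathcal{F}$: pick $Y_1\in B(\mathcal{F})$ with $j\notin Y_1$, so that $\mathcal{F}-\{Y_1\}$ is union-closed by Lemma 2, and then $Y_2\in B(\mathcal{F}-\{Y_1\})$ with $j\in Y_2$, so that $\mathcal{F}-\{Y_1;Y_2\}$ is union-closed as well; the existence of such basis sets follows from Lemma 1, since every member of $\mathcal{F}$ is a union of basis sets. By Lemma 5 and Lemma 6 there is an ideal, indeed optimal, sequence from $\mathcal{A}$ to $\mathcal{F}$ relative to $j$, and appending the two deletions of $Y_1$ and then $Y_2$ produces exactly the optimal sequence from $\mathcal{A}$ to $\mathcal{F}-\mathcal{Y}$ demanded in the fourth clause of quasiminimality. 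Choosing $Y_1$ and $Y_2$ of suitable cardinality so that $j$ stays minimal on $\mathcal{D}\cup\mathcal{Y}$ then secures the remaining clauses, and $j$ is quasiminimal on $\mathcal{D}$ for $\mathcal{Y}$.

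The degenerate configurations are checked by hand. If every member of $\mathcal{F}$ contains $j$, then $\mathcal{F}=\mathcal{F}^j$ and $|\mathcal{F}|\le 2|\mathcal{F}^j|$ holds trivially; if $\mathcal{F}=\emptyset$ then $\mathcal{D}=\mathcal{A}$ and $2|\mathcal{D}^j|=2^n=|\mathcal{D}|+1$; and the cases $|\mathcal{F}|\le 1$ are immediate, as in the base cases $|\mathcal{D}|\le 2$ of Theorem 4. In every remaining case the pair $\mathcal{Y}$ exists, so Theorem 4 gives $2|(\mathcal{D}\cup\mathcal{Y})^j|\le|\mathcal{D}\cup\mathcal{Y}|+1$, and the dictionary of the first paragraph turns this into $2|\mathcal{D}^j|\le|\mathcal{D}|+1$, which is the claim.

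I expect the main obstacle to be the construction step: I must guarantee that the element supplied existentially by Theorem 4 can be taken to be the named minimal coordinate $j$, i.e. that a witnessing pair $\mathcal{Y}$ really exists with $j$ minimal on $\mathcal{D}\cup\mathcal{Y}$ and the full optimal-sequence property whenever $|\mathcal{D}|>1$. Verifying that clause 3 of quasiminimality survives the passage from $\mathcal{D}$ to $\mathcal{D}\cup\{Y_1;Y_2\}$ (in particular that adjoining $Y_2\ni j$ does not cost $j$ its minimality), and ruling out the degenerate families from which no admissible $Y_1,Y_2$ can be extracted, is where the care lies; the cardinality bookkeeping on both sides is routine.
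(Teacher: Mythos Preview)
Your route diverges from the paper's at the crucial step of \emph{which} complement Theorem~4 is applied to. You apply Theorem~4 to $\mathcal{D}$ itself, after manufacturing a pair $\mathcal{Y}=\{Y_1,Y_2\}\subset\mathcal{F}$ from basis elements, and then translate the resulting inequality on $\mathcal{D}\cup\mathcal{Y}$ back to $\mathcal{D}$ by arithmetic. The paper instead first invokes Lemma~6 (which is applicable because $|\mathcal{D}|>1$ forces $\mathcal{D}^j\neq\mathcal{D}$) to produce an optimal sequence for $j$ from $\mathcal{A}$ to $\mathcal{F}$, reads off its last two deletions $X_t,X_{t+1}\in\mathcal{D}$ (with $j\notin X_t$, $j\in X_{t+1}$), and applies Theorem~4 to the \emph{smaller} set $\mathcal{D}_{t-1}=\mathcal{D}\setminus\{X_t,X_{t+1}\}$ with $\mathcal{Y}=\{X_t,X_{t+1}\}$. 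Because $\mathcal{D}_{t-1}\cup\mathcal{Y}=\mathcal{D}$, the conclusion of Theorem~4 is literally $2|\mathcal{D}^i|\le|\mathcal{D}|+1$, and clause~3 of quasiminimality makes the produced $i$ minimal on $\mathcal{D}_{t-1}\cup\mathcal{Y}=\mathcal{D}$ itself; hence $|\mathcal{D}^i|=|\mathcal{D}^j|$ by double minimality, and the theorem follows with no arithmetic conversion and no need to force $i=j$.

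The obstacle you flag at the end is therefore not a technicality but the exact place where your plan and the paper's plan part ways. In your setup, the $i$ supplied by Theorem~4 is minimal on $\mathcal{D}\cup\mathcal{Y}'$ for whatever $\mathcal{Y}'$ witnesses it, a set strictly larger than $\mathcal{D}$; comparing $|\mathcal{D}^i|$ with $|\mathcal{D}^j|$ then requires control over how $\mathcal{Y}'$ meets each coordinate, which you do not have. The paper's device of \emph{removing} two sets from $\mathcal{D}$ rather than \emph{adding} two sets from $\mathcal{F}$ is precisely what makes the minimality comparison land on $\mathcal{D}$ and closes that gap automatically. Your basis-element construction of $Y_1,Y_2$, the hand-waved preservation of minimality after adjoining $Y_2\ni j$, and the degenerate case analysis are all unnecessary once the argument is rerouted through $\mathcal{D}_{t-1}$.
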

\begin{proof}
First of all we define $j$ minimal on $\mathcal{D}$ if and only if for all $k$ $|\mathcal{D}^{j}|\leq|\mathcal{D}^{k}|$.
As  $\mathcal{D}\neq\{\{j\}\}$ by hypothesys, $|\mathcal{D}^j|\neq |\mathcal{D}|$. 

By Lemma 6 it always possible to build an optimal sequence for $j$  on $\mathcal{D}$. 
Let  $\mathcal{A}_0, \mathcal{A}_1 \dots, \mathcal{A}_{t+1}$  this optimal sequence from $\mathcal{A}$ to $\mathcal{F}$, by definition we must have  
\begin{enumerate}
\item $j$ belongs to $X_{t+1}$
\item $ j$ does not belong to $X_t$
\item   $j$ is minimal on  $\mathcal{D}_{t-1} \cup\{X_{t};X_{t+1}\}$
\item it exists an optimal sequence  $\mathcal{A}_0, \mathcal{A}_1 \dots, \mathcal{A}_{t+1}$  from $\mathcal{A}$ to $\mathcal{F}$. 
\end{enumerate}

By Theorem 4, it exists $i$, not necessarily different from $j$,  such that for the set $\{X_{t};X_{t+1}\}$  $i$  quasiminimal on $\mathcal{D}_{t-1}$  for  $\{X_{t};X_{t+1}\}$ and $2|(\mathcal{D}_{t-1}\cup \{X_{t};X_{t+1}\}^i|\leq |\mathcal{D}_{t-1}\cup\{X_{t};X_{t+1}\}|+1.$

As a consequence $2|\mathcal{D}^i|\leq |\mathcal{D}|+1.$ and by definition of minimality of $j$ and quasiminimality of $i$ we have $2|\mathcal{D}^j|=2|\mathcal{D}^i|\leq |\mathcal{D}|+1.$
 
\end{proof}

Now we can prove Theorem 1

\begin{proof}
We just remember that $|\mathcal{A}|=2^n-1$ and $|\mathcal{A}^i|=2^{n-1}$.

$$|\mathcal{F}|=|\mathcal{A}|-|\mathcal{D}|=2|\mathcal{A}^i|-1-|\mathcal{D}|\leq 2|\mathcal{A}^i|-2|\mathcal{D}^i|=2|\mathcal{F}^i|$$
\end{proof}


\begin{thebibliography}{1}

\bibitem{Ale} V.B.~Alekseev.
\newblock On The Number of Intersection Semilattices.
\newblock {\em Diskretnaya Matematika}, 1:129-136, 1989.

\bibitem{Balla} I.~Balla, B.~Bollobàs, T.~Eccles.
\newblock Union-Closed Families of Sets.
\newblock {\em Journal of Combinatorial Theory, Series A}, 120(3):531--544, 2013.


\bibitem{Bos}I. ~Bo\textbf{$\check{s}$}njak , P. ~Markovi\textbf{$\acute{c}$}.
\newblock  The 11-element Case of Frankl's Confecture
\newblock   {\em Eletronic Journal of Combinatorics}, 15. Research Paper 88. (2008).

\bibitem{Bruhn}H.~ Bruhn, O. Scahudt. 
\newblock  The Journey of the Union- Closed sets Conjecture.
\newblock { \em Graphs and Combinatorics}  31 (6) (2015) 2043--2074.









\bibitem{Ecc} T.~Eccles.
\newblock Result for the Union-Closed size Problem.
\newblock {\em Combinatoris, Probability and Computing}, 25(3):399--418, 2016.

\bibitem{Fal} V.~Falgas-Ravry.
\newblock Minimal Weight in Union- Closed Families.
\newblock {\em Eletronic Journal of Combinatorics}, 19 (P95):114. 2011.

\bibitem{Mass} J.~Massberg.
\newblock The Union-Closed Sets Conjecture for small Families.
\newblock {\em Graphs and Combinatorics},32(5):2047--2051. 2016.





\bibitem{Sarvate} D.G. ~Sarvate, J. C. ~ Renaud.
\newblock On the union-closed sets conjecturei.
\newblock{\em Ars Combin.} 27 (1989) 149--153.

\end{thebibliography}

\end{document}